\newtheorem{thm}{Theorem}[section]
\newtheorem{lmm}[thm]{Lemma}
\newtheorem{prp}[thm]{Proposition}
\theoremstyle{definition}
\newtheorem{exa}[thm]{Example}
\theoremstyle{remark}
\newtheorem*{rem}{Remark}
\def\stf#1#2{\left[#1\atop#2\right]} 
\def\sts#1#2{\left\{#1\atop#2\right\}}
\title{Bernoulli-Carlitz and Cauchy-Carlitz numbers with Stirling-Carlitz numbers}
\author{Hajime \textsc{Kaneko}\footnote{Institute of Mathematics, University of Tsukuba, 1-1-1 Tennodai, Tsukuba, Ibaraki 350-8571 Japan.\newline 
Research Core for Mathematical Sciences, University of Tsukuba, 1-1-1, Tennodai, Tsukuba, Ibaraki, 305-8571, Japan. 
\newline e-mail: \texttt{kanekoha@math.tsukuba.ac.jp}}
          ~and Takao \textsc{Komatsu}\footnote{ School of Mathematics and Statistics, Wuhan University, Wuhan 430072 China. \endgraf e-mail: \texttt{komatsu@whu.edu.cn}}}
\keywords{\textit{Bernoulli-Carlitz numbers, Cauchy-Carlitz numbers, Stirling-Carlitz numbers}:}         
\begin{document}
%

\maketitle

\begin{abstract}      
Recently, the Cauchy-Carlitz number was defined as the counterpart of the Bernoulli-Carlitz number. Both numbers can be expressed explicitly in terms of so-called Stirling-Carlitz numbers. In this paper,  
we study the second analogue of Stirling-Carlitz numbers and give some general formulae, including Bernoulli and Cauchy numbers in formal power series with complex coefficients, and Bernoulli-Carlitz and Cauchy-Carlitz numbers in function fields. We also give some applications of Hasse-Teichm\"uller derivative to hypergeometric Bernoulli and Cauchy numbers in terms of associated Stirling numbers.
\end{abstract}

\section{The second analogue of Stirling-Carlitz numbers}   

The (unsigned) Stirling numbers of the first kind $\stf{n}{k}$ and the Stirling numbers of the second kind $\sts{n}{k}$ are defined by the generating functions 
$$
\frac{\bigl(-\log(1-x)\bigr)^k}{k!}=\sum_{n=0}^\infty\stf{n}{k}\frac{x^n}{n!}\quad\hbox{and}\quad 
\frac{(e^x-1)^k}{k!}=\sum_{n=0}^\infty\sts{n}{k}\frac{x^n}{n!}\,,
$$ 
respectively.  Based upon these generating functions, in \cite{KK} we introduced Stirling-Carlitz numbers $\stf{n}{k}_C$ and $\sts{n}{k}_C$.  Using these $C$-Stirling-Carlitz numbers, Bernoulli-Carlitz numbers $BC_n$ (\ref{bcarlitz}) and Cauchy-Carlitz numbers $CC_n$ (\ref{ccarlitz}) can be expressed explicitly.  
Notice that Bernoulli-Carlitz numbers $BC_n$ (\cite{Car35,Car37,Car40,Goss}) are given by 
\begin{equation} 
\frac{z}{e_C(z)}=\sum_{n=0}^\infty\frac{BC_n}{\Pi(n)}z^n
\label{bcarlitz}
\end{equation}  
and Cauchy-Carlitz numbers $CC_n$ (\cite{KK}) are give by 
\begin{equation} 
\frac{z}{\log_C(z)}=\sum_{n=0}^\infty\frac{CC_n}{\Pi(n)}z^n\,. 
\label{ccarlitz} 
\end{equation}  

The (unsigned) Stirling numbers of the first kind $\stf{n}{k}$ appear in the falling factorial 
$$
x(x-1)\cdots(x-n+1)=\sum_{k=0}^n(-1)^{n-k}\stf{n}{k}x^k
$$ 
and the Stirling numbers of the second kind $\sts{n}{k}$ may be defined by 
\begin{equation} 
x^n=\sum_{k=0}^n x(x-1)\cdots(x-k+1)\sts{n}{k}\,. 
\label{30}
\end{equation}  
Based upon such relations, we can introduce different type Stirling-Carlitz numbers $\stf{n}{k}_A$ and $\sts{n}{k}_A$.  

Throughout this paper, let $\mathbb F_r$ be the field with $r$ elements and 
$\mathbb A=\mathbb F_r[T]$ (resp. $\mathbb F_r(T)$) the ring of  polynomials 
(resp. the field of rational functions) in one variable over $\mathbb F_r$. 
According to the notations used in \cite{Goss}, 
set $[i]:=T^{r^i}-T\in\mathbb A$ ($i\ge 1$), $D_i:=[i][i-1]^r\cdots[1]^{r^{i-1}}$ ($i\ge 1$) with $D_0=1$, and $L_i:=[i][i-1]\cdots[1]$ ($i\ge 1$) with $L_0=1$.  Then, The Carlitz exponential $e_C(x)$ is defined by 
$$  
e_C(x)=\sum_{i=0}^\infty\frac{x^{r^i}}{D_i} 
$$ 
and the Carlitz logarithm $\log_C(x)$ is defined by 
$$  
\log_C(x)=\sum_{i=0}^\infty(-1)^i\frac{x^{r^i}}{L_i}\,.  
$$   
The Carlitz factorial $\Pi(i)$ is defined by 
$$   
\Pi(i)=\prod_{j=0}^m D_j^{c_j}
$$  
for a non-negative integer $i$ with $r$-ary expansion: 
$$   
i=\sum_{j=0}^m c_j r^j\quad (0\le c_j<r)\,. 
$$ 
Denote the $d$-dimensional $\mathbb F_r$-vector space of polynomials of degree$<d$ by $\mathbb A(d):=\{\alpha\in\mathbb A|\deg(\alpha)<d\}$.  
As 
$$
e^{x\log(1+t)}=(1+t)^x=\sum_{n=0}^\infty\binom{x}{n}t^n 
$$ 
with 
$$
\binom{x}{n}=\frac{x(x-1)\cdots(x-n+1)}{n!}\,,
$$ 
as an analogous version,  set  
$$
e_C(z\log_C(x))=\sum_{n=0}^\infty E_n(z)x^{r^n}\,,
$$ 
where 
$$
E_n(z)=\frac{e_n(z)}{D_n}
$$ 
(\cite[Corollary 3.5.3]{Goss}).  In addition, we have 
\begin{equation}
e_n(z)=\prod_{\alpha\in\mathbb A(n)}(z+\alpha)
=\prod_{\alpha\in\mathbb A(n)}(z-\alpha)
=\sum_{i=0}^n\stf{n}{i}_A z^{r^i}\,,   
\label{en}
\end{equation}   
where 
\begin{equation} 
\stf{n}{i}_A=(-1)^{n-i}\frac{D_n}{D_i L_{n-i}^{r^i}} 
\label{ascfirst}
\end{equation}  
(\cite[Theorem 3.1.5]{Goss}). 

As an analogue of the Stirling numbers of the second kind in (\ref{30}),  
it is natural to define $\sts{n}{k}_A$ by 
\begin{equation} 
\sum_{k=0}^n e_k(z)\sts{n}{k}_A=z^{r^n}
\label{31}
\end{equation}     
Then, similarly to the $C$-Stirling-Carlitz numbers $\stf{n}{k}_C$ and $\sts{n}{k}_C$ (\cite[Theorem 5]{KK}),  $A$-Stirling-Carlitz numbers $\stf{n}{k}_A$ and $\sts{n}{k}_A$ satisfy the orthogonal identities. 

\begin{thm}  
For $i\le n$, 
\begin{align}
\sum_{k=i}^n\stf{k}{i}_A\sts{n}{k}_A=\delta_{n,i}\,,
\label{33}\\
\sum_{k=i}^n\sts{k}{i}_A\stf{n}{k}_A=\delta_{n,i}\,.
\label{34} 
\end{align} 
\label{th40} 
\end{thm}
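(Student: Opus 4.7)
My plan is to prove both identities as standard consequences of the fact that the two defining relations (\ref{en}) and (\ref{31}) present $\stf{\cdot}{\cdot}_A$ and $\sts{\cdot}{\cdot}_A$ as the entries of two mutually inverse change-of-basis matrices. The key observation is that for each $n$ the family $\{z^{r^0},z^{r^1},\ldots,z^{r^n}\}$ is linearly independent (they are distinct monomials in $z$), and relation (\ref{en}) together with $\stf{n}{n}_A=1$ (immediate from (\ref{ascfirst})) shows that $\{e_0(z),e_1(z),\ldots,e_n(z)\}$ is obtained from it by an upper-triangular matrix with $1$'s on the diagonal; hence $\{e_k(z)\}_{k=0}^n$ is also a basis for the $\mathbb{F}_r(T)$-span of $\{z^{r^i}\}_{i=0}^n$.

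For (\ref{33}), I would take relation (\ref{31}) and substitute in the expansion (\ref{en}) of each $e_k(z)$:
\begin{align*}
z^{r^n}
&=\sum_{k=0}^n e_k(z)\sts{n}{k}_A
=\sum_{k=0}^n\sts{n}{k}_A\sum_{i=0}^k\stf{k}{i}_A z^{r^i}\\
&=\sum_{i=0}^n\left(\sum_{k=i}^n\stf{k}{i}_A\sts{n}{k}_A\right)z^{r^i}.
\end{align*}
Since the monomials $z^{r^i}$ ($0\le i\le n$) are linearly independent, comparing the coefficient of $z^{r^i}$ on both sides yields exactly (\ref{33}).

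For (\ref{34}), I would reverse the roles: substitute (\ref{31}) (rewritten as $z^{r^k}=\sum_{i=0}^k\sts{k}{i}_A e_i(z)$) into (\ref{en}):
\begin{align*}
e_n(z)
&=\sum_{k=0}^n\stf{n}{k}_A z^{r^k}
=\sum_{k=0}^n\stf{n}{k}_A\sum_{i=0}^k\sts{k}{i}_A e_i(z)\\
&=\sum_{i=0}^n\left(\sum_{k=i}^n\sts{k}{i}_A\stf{n}{k}_A\right)e_i(z).
\end{align*}
By the linear independence of $\{e_i(z)\}_{i=0}^n$ established above, comparing coefficients of $e_i(z)$ gives (\ref{34}).

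I do not expect any real obstacle here, since the argument is purely formal once one knows that both families are bases; the only substantive point is noting that $\{e_k(z)\}$ is indeed a basis, which follows from the upper-triangularity of (\ref{en}) with unit diagonal entries. Accordingly, the proof is essentially a matter of rearranging finite sums and comparing coefficients of the appropriate basis vectors.
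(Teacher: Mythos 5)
Your argument is exactly the paper's: substitute (\ref{en}) into (\ref{31}) and compare coefficients of $z^{r^i}$ for (\ref{33}), then substitute (\ref{31}) into (\ref{en}) and compare coefficients of $e_k(z)$ for (\ref{34}); your extra remark that $\{e_k(z)\}$ forms a basis (by unitriangularity) just makes explicit the coefficient comparison the paper performs tacitly. The proposal is correct and takes essentially the same route as the paper.
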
  
\begin{proof} 
Since 
\begin{align*} 
z^{r^n}&=\sum_{k=0}^n e_k(z)\sts{n}{k}_A\\
&=\sum_{k=0}^n\sum_{i=0}^k\stf{k}{i}_A z^{r^i}\sts{n}{k}_A\\
&=\sum_{i=0}^n\sum_{k=i}^n\stf{k}{i}_A\sts{n}{k}_A z^{r^i}\,, 
\end{align*} 
by comparing the coefficients of $z^{r^i}$ ($i=0,1,\dots,n$), we get (\ref{33}).  
Since  
\begin{align*}  
e_n(z)&=\sum_{i=0}^n\stf{n}{i}_A z^{r^i}\\
&=\sum_{i=0}^n\stf{n}{i}_A\sum_{k=0}^i e_k(z)\sts{i}{k}_A\\
&=\sum_{k=0}^n\sum_{i=k}^n\stf{n}{i}_A\sts{i}{k}_A e_k(z)\,, 
\end{align*} 
by comparing the coefficients of $e_k(z)$ ($k=n,n-1,\dots,1,0$),  
we have (\ref{34}).  
\end{proof}

$A$-Stirling-Carlitz numbers of the second kind have an explicit expression as those of the first kind in (\ref{ascfirst}).  

\begin{thm}  
For $0\le j\le n$, we have  
$$
\sts{n}{j}_A=\frac{D_n}{D_{j}D_{n-j}^{r^j}}\,. 
$$
\label{th50} 
\end{thm}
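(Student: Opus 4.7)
The plan is to verify the explicit formula by showing it satisfies one of the orthogonality relations of Theorem~\ref{th40}. Since the matrix $(\stf{k}{i}_A)_{0\le i\le k\le n}$ is upper triangular with diagonal entries $\stf{k}{k}_A=1$, it is invertible, so relation (\ref{33}) uniquely determines the numbers $\sts{n}{k}_A$. Hence it suffices to check that the claim $\sts{n}{j}_A=D_n/(D_j D_{n-j}^{r^j})$ satisfies
$$
\sum_{k=i}^n\stf{k}{i}_A\sts{n}{k}_A=\delta_{n,i}\qquad (0\le i\le n).
$$

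Substituting the formula of (\ref{ascfirst}) and the conjectured expression, the left side becomes
$$
\sum_{k=i}^n(-1)^{k-i}\frac{D_k}{D_i L_{k-i}^{r^i}}\cdot\frac{D_n}{D_k D_{n-k}^{r^k}}
=\frac{D_n}{D_i}\sum_{k=i}^n\frac{(-1)^{k-i}}{D_{n-k}^{r^k} L_{k-i}^{r^i}}.
$$
The case $n=i$ leaves only the term $k=n$, which equals $D_n/(D_n D_0 L_0^{r^n})\cdot D_n=1$, as required. For $n>i$, setting $j=k-i$ and $m=n-i\ge 1$ and using that $r^i$-th powers distribute over sums in characteristic $p\mid r$, the desired vanishing reduces to
$$
\sum_{j=0}^m\frac{(-1)^j}{D_{m-j}^{r^j}L_j}=0\qquad (m\ge 1).
$$

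The final step is to recognize this as the coefficient of $x^{r^m}$ in $\log_C(e_C(x))$. Indeed, expanding
$$
\log_C(e_C(x))=\sum_{j=0}^\infty\frac{(-1)^j}{L_j}\Bigl(\sum_{i=0}^\infty\frac{x^{r^i}}{D_i}\Bigr)^{r^j}
=\sum_{j,i\ge0}\frac{(-1)^j}{L_j D_i^{r^j}}x^{r^{i+j}}
=\sum_{m\ge 0}x^{r^m}\sum_{j=0}^m\frac{(-1)^j}{L_j D_{m-j}^{r^j}},
$$
and since $\log_C$ and $e_C$ are mutually inverse so that $\log_C(e_C(x))=x$, the inner sum must vanish for every $m\ge 1$.

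The main obstacle is purely notational: one has to track the interaction between the indices $r^i$, $r^k$, $r^{k-i}$ carefully when reorganizing the sum. Once the bookkeeping is done, the identity is nothing more than the functional equation $\log_C\circ\, e_C=\mathrm{id}$ read off coefficientwise, and both orthogonality relations then follow as in Theorem~\ref{th40}.
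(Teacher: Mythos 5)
Your proof is correct, and it reaches the result by a slightly different route than the paper. The paper argues by downward induction on $j$: it uses relation (\ref{33}) to solve for $\sts{n}{n-i}_A$ in terms of the values already established for larger $j$, and then reduces to a known coefficient identity. You instead note that (\ref{33}) is a triangular system with unit diagonal, so it determines the $\sts{n}{k}_A$ uniquely (given Theorem \ref{th40}), and you verify the closed formula directly. The computational heart is the same in both arguments: substitute (\ref{ascfirst}) and the conjectured expression, pull the Frobenius power outside the sum --- here you should, as the paper does in one line, note that $(-1)^{r^l}=-1$ in every characteristic so the signs survive the $r^i$-th power --- and reduce everything to the single identity
$$
\sum_{j=0}^m\frac{(-1)^j}{L_j D_{m-j}^{r^j}}=\delta_{m,0}\,.
$$
The remaining difference is how this identity is justified: the paper cites equation (1.63) of Kochubei, while you rederive it as the coefficient of $x^{r^m}$ in $\log_C(e_C(x))=x$, which is fine provided you may take for granted (or cite, e.g.\ from Goss) that $e_C$ and $\log_C$ are mutual formal inverses --- that fact is exactly the content of the cited identity. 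Your uniqueness framing dispenses with the induction; the paper's induction dispenses with the (easy) invertibility remark. Both buy essentially the same thing, so the choice is a matter of taste.
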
  
\begin{proof}  
We prove the theorem by induction on $j$. 
If $j=n$, then 
$$
\sts{n}{n}_A=1
$$
by (\ref{33}). Next, we consider the case of $j=n-i$ with $i>0$. 
Using the inductive hypothesis and (\ref{33}), we get 
\begin{align} 
\sts{n}{n-i}_A&=-\sum_{d=0}^{i-1}\stf{n-d}{n-i}_A\sts{n}{n-d}_A \nonumber\\
&=-\sum_{d=0}^{i-1}\frac{(-1)^{i-d}D_{n-d}}{D_{n-i}L_{i-d}^{r^{n-i}}}\frac{D_n}{D_{n-d}D_d^{r^{n-d}}} 
\nonumber \\
&= -\frac{D_n}{D_{n-i}}\sum_{d=0}^{i-1}\frac{(-1)^{i-d}}{L_{i-d}^{r^{n-i}}D_d^{r^{n-d}}}
\label{eqn:a1}
\end{align}
Note for any $l\geq 0$ that 
$$(-1)^{r^l}=-1\,.$$
In fact, if $r$ is even, then $-1=1$ because the characteristic of $\mathbb{F}_r$ is $2$. 
Thus, 
\begin{align*}
-\sum_{d=0}^{i-1}\frac{(-1)^{i-d}}{L_{i-d}^{r^{n-i}}D_d^{r^{n-d}}}
& =
\left(-\sum_{d=0}^{i-1}\frac{(-1)^{i-d}}{L_{i-d}D_d^{r^{i-d}}}\right)^{r^{n-i}}\\
& =
\left(\frac{1}{D_i}-\sum_{d=0}^{i}\frac{(-1)^{i-d}}{L_{i-d}D_d^{r^{i-d}}}\right)^{r^{n-i}}\\
& =
\left(\frac{1}{D_i}-\sum_{a=0}^{i}\frac{(-1)^{a}}{L_{a}D_{i-a}^{r^{a}}}\right)^{r^{n-i}}.
\end{align*}
It is well known for any $l\geq 0$ that 
$$
\sum_{a=0}^l\frac{(-1)^{a}}{L_{a}D_{l-a}^{r^{a}}}=\delta_{l,0}
$$
(for instance, see equation (1.63) in \cite{Koc}). 
Hence we obtain by $i\geq 1$ that 
\begin{align}
-\sum_{d=0}^{i-1}\frac{(-1)^{i-d}}{L_{i-d}^{r^{n-i}}D_d^{r^{n-d}}}
=\frac{1}{D_i^{r^{n-i}}}.
\label{eqn:a2}
\end{align}
Combining (\ref{eqn:a1}) and (\ref{eqn:a2}), we deduce that 
\begin{align*}
\sts{n}{n-i}_A=\frac{D_n}{D_{n-i}D_i^{r^{n-i}}}\,.
\end{align*} 
\end{proof}

\begin{exa}  
By using (\ref{en}), we calculate $\stf{n}{i}_A (i=0,1,\ldots,n)$ in the case of 
$r=3$ and $n=1,2,3$.  By 
$$
e_1(z)=z(z+1)(z-1)=\sum_{i=0}^1\stf{1}{i}_A z^{3^i}\,, 
$$ 
we have 
$$
\stf{1}{0}_A=-1\quad\hbox{and}\quad \stf{1}{1}_A=1\,. 
$$  
If $n=2$, then by 
\begin{align*} 
e_2(z)&=z(z+1)(z-1)(z+T)(z-T)\\
&\quad \times (z+T+1)(z+T-1)(z-T+1)(z-T-1)\\
&=\sum_{i=0}^2\stf{2}{i}_A z^{3^i}\,,
\end{align*} 
we see 
$$
\stf{2}{0}_A=T^6+T^4+T^2,\quad \stf{2}{1}_A=-(T^6+T^4+T^2+1),\quad\hbox{and}\quad \stf{2}{2}_A=1\,. 
$$ 
Moreover, by 
\begin{align*} 
&e_3(z)\\
&=z(z+1)(z-1)(z+T)(z-T)\\
&\quad \times(z+T+1)(z+T-1)(z-T+1)(z-T-1)\\
&\quad\times(z+T^2)(z+T^2+1)(z+T^2-1)(z+T^2+T)(z+T^2-T)\\
&\quad\times (z+T^2+T+1)(z+T^2+T-1)(z+T^2-T+1)(z+T^2-T-1)\\
&\quad\times(z-T^2)(z-T^2+1)(z-T^2-1)(z-T^2+T)(z-T^2-T)\\
&\quad\times (z-T^2+T+1)(z-T^2+T-1)(z-T^2-T+1)(z-T^2-T-1)\\ 
&=\sum_{i=0}^3\stf{3}{i}_A z^{3^i}\,,
\end{align*} 
we obtain 
\begin{align*} 
\stf{3}{0}_A&=-T^{42}-T^{40}-T^{38}-T^{36}+T^{34}+T^{32}+T^{30}+T^{28}\\
&\quad +T^{24}+T^{22}+T^{20}+T^{18}-T^{16}-T^{14}-T^{12}-T^{10}\,\\
\stf{3}{1}_A&=T^{42}+T^{40}+T^{38}-T^{36}-T^{34}-T^{32}\\
&\quad -T^{16}-T^{14}-T^{12}+T^{10}+T^8+T^6\,,\\
\stf{3}{2}_A&=-T^{36}-T^{30}-T^{28}-T^{24}-T^{22}-T^{20}-T^{18}\\
&\quad -T^{16}-T^{14}-T^{12}-T^8-T^6-1\,,\\  
\stf{3}{3}_A&=1\,. 
\end{align*}  
\end{exa}

\section{Applications to hypergeometric Bernoulli and Cauchy numbers}  

The Hasse-Teichm\"uller derivative $H^{(n)}$ of order $n$ is defined by 
$$
H^{(n)}\left(\sum_{m=R}^{\infty} a_m z^m\right)
=\sum_{m=R}^{\infty} a_m \binom{m}{n}z^{m-n}
$$
for $\sum_{m=R}^{\infty} a_m z^m\in \mathbb{F}((z))$, 
where $\mathbb{F}$ is a field of any characteristic, 
$R$ is an integer and $a_m\in\mathbb{F}$ for any $m\geq R$. 

The Hasse-Teichm\"uller derivatives satisfy the product rule \cite{Teich}, the quotient rule \cite{GN} and the chain rule \cite{Hasse}. 
One of the product rules is described as follows:  
\begin{lmm}  
For $f_i\in\mathbb F[[z]]$ ($i=1,\dots,k$) with $k\ge 2$ and for $n\ge 1$, we have 
$$
H^{(n)}(f_1\cdots f_k)=\sum_{i_1,\dots,i_k\ge 0\atop i_1+\cdots+i_k=n}H^{(i_1)}(f_1)\cdots H^{(i_k)}(f_k)\,. 
$$ 
\label{productrule2}
\end{lmm}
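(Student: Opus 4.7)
The plan is to prove the identity by induction on the number of factors $k$, reducing everything to the two-factor case which follows from a Vandermonde-type identity for binomial coefficients.

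For the base case $k=2$, I would write $f_1 = \sum_{m\geq R_1} a_m z^m$ and $f_2 = \sum_{m\geq R_2} b_m z^m$, so that the product is $f_1 f_2 = \sum_{N} \bigl(\sum_{m+\ell=N} a_m b_\ell\bigr) z^N$. Applying $H^{(n)}$ to the product by definition produces
$$H^{(n)}(f_1 f_2) = \sum_{N} \binom{N}{n}\Bigl(\sum_{m+\ell=N} a_m b_\ell\Bigr) z^{N-n}.$$
On the other hand, the claimed right-hand side is $\sum_{i_1+i_2=n} \bigl(\sum_m a_m \binom{m}{i_1} z^{m-i_1}\bigr)\bigl(\sum_\ell b_\ell \binom{\ell}{i_2} z^{\ell-i_2}\bigr)$, and collecting the coefficient of $z^{N-n}$ (where $N = m+\ell$) reduces the matching of both sides to the classical Vandermonde identity
$$\binom{m+\ell}{n} = \sum_{i_1+i_2=n} \binom{m}{i_1}\binom{\ell}{i_2}.$$
This identity is valid over any field since it holds as an identity of integers.

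For the inductive step, assuming the formula for $k-1$ factors with $k \geq 3$, I would group the product as $(f_1 \cdots f_{k-1}) f_k$, apply the two-factor case to obtain
$$H^{(n)}(f_1\cdots f_k) = \sum_{j+i_k=n} H^{(j)}(f_1\cdots f_{k-1}) H^{(i_k)}(f_k),$$
and then substitute the inductive hypothesis $H^{(j)}(f_1\cdots f_{k-1}) = \sum_{i_1+\cdots+i_{k-1}=j} H^{(i_1)}(f_1)\cdots H^{(i_{k-1})}(f_{k-1})$. Reorganizing the nested sum by the single constraint $i_1+\cdots+i_k=n$ yields the claim.

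The main obstacle, modest as it is, lies in the base case: one must handle formal power series with possibly negative lowest degree (the definition allows $R \in \mathbb{Z}$), and verify that the rearrangement of the double series is legitimate. Because for each fixed exponent $z^{N-n}$ only finitely many pairs $(m,\ell)$ with $m+\ell=N$ contribute (both bounded below by $R_1,R_2$), the manipulation is purely formal and causes no convergence issue. Once the two-factor case is settled, the induction is routine.
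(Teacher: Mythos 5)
Your proposal is correct. Note, however, that the paper does not actually prove Lemma \ref{productrule2}: it simply cites Teichm\"uller's original paper for the product rule (and other references for the quotient and chain rules), so there is no in-paper argument to compare with. Your argument is the standard self-contained proof: the two-factor case reduces, coefficientwise, to the Vandermonde convolution $\binom{m+\ell}{n}=\sum_{i_1+i_2=n}\binom{m}{i_1}\binom{\ell}{i_2}$, which is an identity of integers and hence valid after reduction into a field of any characteristic (this is exactly the point that makes the Hasse--Teichm\"uller derivative work where the naive iterated derivative $\frac{1}{n!}\frac{d^n}{dz^n}$ fails in positive characteristic), and the induction on $k$ via the grouping $(f_1\cdots f_{k-1})f_k$ is routine; your remark that each coefficient of the product involves only finitely many terms disposes of any well-definedness issue, and in fact your treatment of series with $R\in\mathbb{Z}$ proves the slightly more general Laurent-series statement, of which the lemma for $\mathbb{F}[[z]]$ is a special case.
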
 

The quotient rules are described as follows:  

\begin{lmm}  
For $f\in\mathbb F[[z]]\backslash \{0\}$ and $n\ge 1$,  
we have 
\begin{align} 
H^{(n)}\left(\frac{1}{f}\right)&=\sum_{k=1}^n\frac{(-1)^k}{f^{k+1}}\sum_{i_1,\dots,i_k\ge 1\atop i_1+\cdots+i_k=n}H^{(i_1)}(f)\cdots H^{(i_k)}(f)
\label{quotientrule1}\\ 
&=\sum_{k=1}^n\binom{n+1}{k+1}\frac{(-1)^k}{f^{k+1}}\sum_{i_1,\dots,i_k\ge 0\atop i_1+\cdots+i_k=n}H^{(i_1)}(f)\cdots H^{(i_k)}(f)\,.
\label{quotientrule2} 
\end{align}   
\label{quotientrules}
\end{lmm}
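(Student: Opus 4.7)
The plan is to identify $H^{(n)}$ with the operator extracting the coefficient of $t^n$ from the formal Taylor expansion $f(z+t)=\sum_{m\ge 0}H^{(m)}(f)(z)t^m$, which is tautologically valid in any characteristic, and then to expand $1/f(z+t)$ as a power series in $t$ in two different ways.

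For (\ref{quotientrule1}), I would write
$$\frac{1}{f(z+t)}=\frac{1}{f(z)}\cdot\frac{1}{1+u/f(z)},\qquad u:=\sum_{m\ge 1}H^{(m)}(f)(z)t^m,$$
and apply the geometric series $1/(1+w)=\sum_{k\ge 0}(-w)^k$; this is legal in $\mathbb F[[t]]$ because $u$ has no constant term in $t$. Expanding each $u^k$ by Lemma \ref{productrule2} and reading off the coefficient of $t^n$ then produces (\ref{quotientrule1}) directly.

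For (\ref{quotientrule2}), the key observation is that Lemma \ref{productrule2} applied with all $k$ factors equal to $f$ gives
$$\sum_{\substack{i_1,\dots,i_k\ge 0\\ i_1+\cdots+i_k=n}}H^{(i_1)}(f)\cdots H^{(i_k)}(f)=H^{(n)}(f^k).$$
Splitting this sum according to how many of the indices $i_l$ vanish (each such slot contributing a factor $H^{(0)}(f)=f$) yields
$$H^{(n)}(f^k)=\sum_{j=1}^{k}\binom{k}{j}f^{k-j}\sum_{\substack{i_1,\dots,i_j\ge 1\\ i_1+\cdots+i_j=n}}H^{(i_1)}(f)\cdots H^{(i_j)}(f)\qquad(n\ge 1).$$
Substituting this into the right-hand side of (\ref{quotientrule2}), interchanging the two summations, and comparing with (\ref{quotientrule1}) reduces the equivalence of the two expressions to the purely combinatorial identity
$$\sum_{k=j}^{n}\binom{n+1}{k+1}\binom{k}{j}(-1)^{k-j}=1\qquad(1\le j\le n).$$

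The main obstacle is verifying this last identity. I would prove it by interpreting $\binom{a-1}{j}$ as $[y^j](1+y)^{a-1}$ and evaluating
$$\sum_{a=0}^{n+1}(-1)^a\binom{n+1}{a}(1+y)^{a-1}=\frac{(1-(1+y))^{n+1}}{1+y}=\frac{(-y)^{n+1}}{1+y},$$
whose coefficient of $y^j$ vanishes for each $j\le n$. Isolating the $a=0$ contribution (which equals $(-1)^j$ under the convention $\binom{-1}{j}=(-1)^j$) and undoing the substitution $a=k+1$ then gives exactly the desired value $1$. A direct induction on $n$ for (\ref{quotientrule2}) via the product rule applied to $f\cdot(1/f)=1$ is an alternative route, but it is computationally heavier because of the binomial weight $\binom{n+1}{k+1}$, so the reduction to (\ref{quotientrule1}) via the combinatorial identity seems cleanest.
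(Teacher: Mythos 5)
The paper never proves Lemma \ref{quotientrules}: both quotient rules are quoted from the literature (the quotient rule is attributed to \cite{GN}, the product rule to \cite{Teich}), so there is no internal proof to compare against; your argument is, as far as I can check, a correct self-contained derivation. The reduction of (\ref{quotientrule2}) to (\ref{quotientrule1}) is sound: splitting the unrestricted sum according to how many indices vanish, interchanging the sums, and invoking $\sum_{k=j}^{n}(-1)^{k-j}\binom{n+1}{k+1}\binom{k}{j}=1$ for $1\le j\le n$ does the job, and your generating-function proof of that identity (with the $a=0$ term read off via $\binom{-1}{j}=(-1)^{j}$) is valid; since it is an identity over $\mathbb Z$, it survives reduction modulo the characteristic of $\mathbb F$. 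The one spot that deserves an explicit word is the foundation of the first step: you apply the Taylor-expansion interpretation $g(z+t)=\sum_{m\ge 0}H^{(m)}(g)\,t^{m}$ not only to $f\in\mathbb F[[z]]$ but implicitly to $g=1/f$, which lies in $\mathbb F((z))$ when $f(0)=0$; equivalently, you use that $g\mapsto\sum_{m}H^{(m)}(g)t^{m}$ is a ring homomorphism on all of $\mathbb F((z))$, so that it carries $1/f$ to $1/f(z+t)$, whereas the paper's product rule (Lemma \ref{productrule2}) is stated only for $\mathbb F[[z]]$. This is easily repaired: either note that $(z+t)^{m}=\sum_{n\ge 0}\binom{m}{n}z^{m-n}t^{n}$ holds also for negative integers $m$ (again an identity over $\mathbb Z$, hence mod $p$), so the homomorphism property extends to $\mathbb F((z))$, or bypass the substitution picture by deriving $\sum_{i+j=n}H^{(i)}(1/f)H^{(j)}(f)=\delta_{n,0}$ and solving this triangular system, which yields (\ref{quotientrule1}) directly. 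Compared with the paper's bare citation of \cite{GN}, your route has the merit of exhibiting the binomial-weighted form (\ref{quotientrule2}) as a purely combinatorial consequence of (\ref{quotientrule1}), rather than as an independent statement.
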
  


In \cite{JKS} Bernoulli numbers and Bernoulli-Carlitz numbers are expressed explicitly by using the Hasse-Teichm\"uller derivative.  In \cite{KK}, Cauchy numbers and Cauchy-Carlitz numbers are expressed explicitly as well.  

In this section, 
by using the Hasse-Teichm\"uller derivative of order $n$, we shall obtain some explicit expressions of the hypergeometric Cauchy numbers $c_{N,n}$, defined by 
$$
\frac{1}{{}_2 F_1(1,N;N+1;-x)}=\sum_{n=0}^\infty c_{N,n}\frac{x^n}{n!}\,, 
$$  
where ${}_2 F_1(a,b;c,z)$ is the hypergeometric function defined by 
$$
{}_2 F_1(a,b;c,z)=\sum_{n=0}^\infty\frac{(a)^{(n)}(b)^{(n)}}{(c)^{(n)}}\frac{z^n}{n!}
$$ 
with $(a)^{(n)}=a(a+1)\cdots(a+n-1)$ ($n\ge 1$) and $(a)^{(0)}=1$.  
We give a different proof for 
the following result shown in \cite[Theorem 1]{Ko3}. 

\begin{thm}  
For $n\ge 1$, 
$$
c_{N,n}=(-1)^n n!\sum_{k=1}^{n}\sum_{i_1,\dots,i_k\ge 1\atop i_1+\cdots+i_k=n}\frac{(-N)^k}{(N+i_1)\cdots(N+i_k)}\,.
$$ 
\label{th_hcauchy1} 
\end{thm}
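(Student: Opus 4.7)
The plan is to apply the quotient rule (\ref{quotientrule1}) from Lemma \ref{quotientrules} directly to $1/{}_2F_1(1,N;N+1;-x)$, after first simplifying the hypergeometric denominator to an explicit power series.

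First I would compute $(1)^{(n)}=n!$, $(N)^{(n)}=N(N+1)\cdots(N+n-1)$ and $(N+1)^{(n)}=(N+1)(N+2)\cdots(N+n)$, which telescopes to give
$$\frac{(1)^{(n)}(N)^{(n)}}{(N+1)^{(n)}}=\frac{n!\,N}{N+n}.$$
Setting $f(x):={}_2F_1(1,N;N+1;-x)$, this yields the explicit expansion
$$f(x)=\sum_{n=0}^{\infty}\frac{(-1)^n N}{N+n}\,x^n,$$
so in particular $f(0)=1$ and $H^{(i)}(f)(0)=[x^i]f(x)=(-1)^i N/(N+i)$ for every $i\ge 0$.

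Next I would recover $c_{N,n}$ from the generating function as $c_{N,n}=n!\cdot H^{(n)}(1/f)(0)$. Applying the quotient rule (\ref{quotientrule1}) at $x=0$, and using $f(0)=1$, gives
\begin{align*}
H^{(n)}\!\left(\frac{1}{f}\right)(0)
&=\sum_{k=1}^{n}(-1)^k\!\!\sum_{\substack{i_1,\dots,i_k\ge 1\\ i_1+\cdots+i_k=n}}\!\!\prod_{j=1}^{k}\frac{(-1)^{i_j}N}{N+i_j}\\
&=\sum_{k=1}^{n}(-1)^k(-1)^{n}\!\!\sum_{\substack{i_1,\dots,i_k\ge 1\\ i_1+\cdots+i_k=n}}\!\!\frac{N^k}{(N+i_1)\cdots(N+i_k)},
\end{align*}
where I used $\prod_j(-1)^{i_j}=(-1)^{i_1+\cdots+i_k}=(-1)^n$. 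Rewriting $(-1)^k N^k=(-N)^k$ and multiplying by $n!$ produces exactly the claimed formula.

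The argument is essentially routine, so there is no real obstacle; the only place that requires a small computation is the simplification of the Pochhammer ratio, and the only place to be careful is the bookkeeping of the sign $(-1)^{i_1+\cdots+i_k}$ when distributing the product over the compositions of $n$.
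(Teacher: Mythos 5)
Your proposal is correct and follows essentially the same route as the paper: simplify ${}_2F_1(1,N;N+1;-x)$ to $N\sum_{j\ge 0}(-x)^j/(N+j)$, evaluate $H^{(i)}$ of it at $x=0$, and apply the quotient rule (\ref{quotientrule1}) with the same sign bookkeeping $(-1)^{i_1+\cdots+i_k}=(-1)^n$. No discrepancies to note.
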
  
\begin{proof}  
Put 
$$
h:={}_2 F_1(1,N;N+1;-x)=N\sum_{j=0}^\infty\frac{(-x)^j}{N+j}\,. 
$$ 
Note that 
\begin{align*} 
\left. H^{(i)}(h)\right|_{x=0}=\left.\sum_{j=0}^\infty\frac{N(-1)^j}{N+j}\binom{j}{i}x^{j-i}\right|_{x=0}
=\frac{N(-1)^i}{N+i}\,. 
\end{align*}    
Hence, by using Lemma \ref{quotientrules} (\ref{quotientrule1}), we have 
\begin{align*} 
\frac{c_{N,n}}{n!}&=\left.H^{(n)}\left(\frac{1}{h}\right)\right|_{x=0}\\
&=\sum_{k=1}^n\left.\frac{(-1)^k}{h^{k+1}}\right|_{x=0}\sum_{i_1,\dots,i_k\ge 1\atop i_1+\cdots+i_k=n}\left.H^{(i_1)}(h)\right|_{x=0}\cdots\left.H^{(i_k)}(h)\right|_{x=0}\\
&=\sum_{k=1}^n(-1)^k\sum_{i_1,\dots,i_k\ge 1\atop i_1+\cdots+i_k=n}\frac{N(-1)^{i_1}}{N+i_1}\cdots\frac{N(-1)^{i_k}}{N+i_k}\\
&=\sum_{k=1}^n\sum_{i_1,\dots,i_k\ge 1\atop i_1+\cdots+i_k=n}\frac{(-N)^k(-1)^n}{(N+i_1)\cdots(N+i_k)}\,. 
\end{align*} 
\end{proof}   

We express the hypergeometric Cauchy numbers in terms of the binomial coefficients, too. 
In fact, by using Lemma \ref{quotientrules} (\ref{quotientrule2}) 
instead of Lemma \ref{quotientrules} (\ref{quotientrule1}) 
in the proof of Theorem \ref{th_hcauchy1}, we obtain the following:  

\begin{prp}  
For $n\ge 1$, 
$$
c_{N,n}=(-1)^n n!\sum_{k=1}^{n}\binom{n+1}{k+1}\sum_{i_1,\dots,i_k\ge 0\atop i_1+\cdots+i_k=n}\frac{(-N)^k}{(N+i_1)\cdots(N+i_k)}\,.
$$ 
\label{th_hcauchy2} 
\end{prp}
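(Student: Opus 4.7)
The plan is to run the proof of Theorem \ref{th_hcauchy1} verbatim, except to invoke formula (\ref{quotientrule2}) of Lemma \ref{quotientrules} in place of (\ref{quotientrule1}). Thus I would put $h := {}_2F_1(1,N;N+1;-x) = N\sum_{j=0}^\infty (-x)^j/(N+j)$ and use the identification $c_{N,n}/n! = H^{(n)}(1/h)|_{x=0}$, which comes directly from the generating function defining $c_{N,n}$.

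The key preliminary observation is that the computation carried out in the proof of Theorem \ref{th_hcauchy1} for $i \ge 1$,
$$
\left. H^{(i)}(h)\right|_{x=0} = \frac{N(-1)^i}{N+i},
$$
remains valid for $i = 0$: the right-hand side equals $1$, which is also $h(0)$. This extension matters because the inner sum in (\ref{quotientrule2}) ranges over tuples $(i_1,\dots,i_k)$ with each $i_j \ge 0$, while in (\ref{quotientrule1}) one had the restriction $i_j \ge 1$. Without this check, one could not uniformly write the tuple contribution as a product of $N(-1)^{i_j}/(N+i_j)$.

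Next I would substitute into (\ref{quotientrule2}) and evaluate at $x = 0$. Since $h(0) = 1$, the factor $1/h^{k+1}$ disappears, and one is left with
$$
\frac{c_{N,n}}{n!} = \sum_{k=1}^n \binom{n+1}{k+1}(-1)^k \sum_{i_1,\dots,i_k\ge 0\atop i_1+\cdots+i_k=n} \prod_{j=1}^k \frac{N(-1)^{i_j}}{N+i_j}.
$$
Collecting $(-1)^k N^k = (-N)^k$ and $(-1)^{i_1+\cdots+i_k} = (-1)^n$, and then multiplying through by $n!$, yields the stated formula.

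Since this is a direct substitution into an already-established framework, there is no genuine obstacle. The only point requiring care is confirming that the $i_j = 0$ case of the Hasse-Teichm\"uller derivative evaluation is consistent with the general formula, so that enlarging the summation range from $\ge 1$ to $\ge 0$ introduces no spurious terms and the same product expression can be used throughout.
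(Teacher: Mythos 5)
Your proposal is correct and is exactly the paper's route: the paper proves Proposition \ref{th_hcauchy2} by rerunning the proof of Theorem \ref{th_hcauchy1} with (\ref{quotientrule2}) in place of (\ref{quotientrule1}), which is what you do, and your check that $H^{(0)}(h)|_{x=0}=h(0)=1=N(-1)^0/(N+0)$ correctly justifies extending the product formula to indices $i_j=0$.
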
 

Expressions of $c_{N,n}$ in 
Theorem \ref{th_hcauchy1} and Proposition \ref{th_hcauchy2} are explicit but not convenient to calculate them.  Now, using associated Stirling numbers of the first kind, we introduce a more convenient expression of hypergeometric Cauchy numbers.  
Associated Stirling numbers of the first kind $\stf{n}{k}_{\ge m}$ (\cite{Cha, Ko6, KLM, KMS}) are given by 
\begin{equation} 
\frac{\bigl(-\log(1-x)-F_{m-1}(x)\bigr)^k}{k!}=\sum_{n=0}^\infty\stf{n}{k}_{\ge m}\frac{x^n}{n!}\quad(m\ge 1)\,, 
\label{assoc.stf} 
\end{equation} 
where 
$$
F_m(x)=\begin{cases} 
0&(m=0);\\ 
\sum_{n=1}^m\frac{x^n}{n}&(m\ge 1)\,. 
\end{cases} 
$$     
When $m=1$, $\stf{n}{k}=\stf{n}{k}_{\ge 1}$ is the classical Stirling numbers of the first kind.  
Now, 
we obtain a simple expression for hypergeometric Cauchy numbers in terms of the binomial coefficients and incomplete Stirling numbers of the first kind.

\begin{thm} 
For $N\ge 1$ and $n\ge 1$, we have  
$$
c_{N,n}=(-1)^n n!\sum_{k=1}^n\binom{n+1}{k+1}\frac{(-N)^k k!}{(n+N k)!}\stf{n+N k}{k}_{\ge N}\,. 
$$ 
\label{th26} 
\end{thm}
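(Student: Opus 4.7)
The plan is to start from the expression of $c_{N,n}$ already proved in Proposition \ref{th_hcauchy2},
$$c_{N,n}=(-1)^n n!\sum_{k=1}^{n}\binom{n+1}{k+1}\sum_{i_1,\dots,i_k\ge 0\atop i_1+\cdots+i_k=n}\frac{(-N)^k}{(N+i_1)\cdots(N+i_k)},$$
and to rewrite the inner multiple sum as a single associated Stirling number of the first kind. Concretely, it suffices to establish the identity
$$\sum_{i_1,\dots,i_k\ge 0\atop i_1+\cdots+i_k=n}\frac{1}{(N+i_1)\cdots(N+i_k)}=\frac{k!}{(n+N k)!}\stf{n+N k}{k}_{\ge N}$$
for every $N\ge 1$, $k\ge 1$ and $n\ge 0$; substituting it into Proposition \ref{th_hcauchy2} then yields the theorem at once.

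To prove the identity I would appeal to the generating function (\ref{assoc.stf}) defining $\stf{\cdot}{\cdot}_{\ge N}$. Since $-\log(1-x)-F_{N-1}(x)=\sum_{j=N}^{\infty}x^j/j$, the defining relation reads
$$\frac{1}{k!}\Bigl(\sum_{j=N}^{\infty}\frac{x^j}{j}\Bigr)^{\!k}=\sum_{m=0}^\infty\stf{m}{k}_{\ge N}\frac{x^m}{m!}.$$
Expanding the $k$-th power on the left directly as a $k$-fold sum gives
$$\frac{1}{k!}\sum_{j_1,\dots,j_k\ge N}\frac{x^{j_1+\cdots+j_k}}{j_1\cdots j_k},$$
and the substitution $j_t=N+i_t$ with $i_t\ge 0$ turns the exponent $j_1+\cdots+j_k$ into $Nk+(i_1+\cdots+i_k)$. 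Grouping these terms by their common degree $Nk+n$ and comparing the coefficient of $x^{Nk+n}$ with the coefficient of $x^m$ on the right-hand side (under the correspondence $m=n+Nk$) gives exactly the required identity.

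The argument is essentially bookkeeping once the generating-function interpretation is in place; the main point that deserves care is the index shift $j_t\mapsto N+i_t$ together with the corresponding change of summation variable $m\mapsto n+Nk$, which must be carried out consistently so that the factorial $(n+Nk)!$ ends up in the right place. Note also that the vanishing $\stf{m}{k}_{\ge N}=0$ for $m<Nk$ is automatically consistent, since in that case the exponent $Nk+n$ cannot be matched. Beyond this, no further manipulation is needed.
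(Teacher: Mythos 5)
Your proposal is correct, and its skeleton is the same as the paper's: reduce the theorem to Proposition \ref{th_hcauchy2} together with the identity
$$
\sum_{i_1,\dots,i_k\ge 0\atop i_1+\cdots+i_k=n}\frac{1}{(N+i_1)\cdots(N+i_k)}
=\frac{k!}{(n+Nk)!}\stf{n+Nk}{k}_{\ge N},
$$
which is exactly the paper's equation (\ref{eq20}). The only divergence is in how this identity is established: the paper writes $\bigl(-\log(1-t)-F_{N-1}(t)\bigr)/t^N=\sum_{j\ge 0}t^j/(j+N)$ and then invokes the Hasse--Teichm\"uller product rule (Lemma \ref{productrule2}), evaluating $H^{(i)}$ of this series at $t=0$, whereas you expand the $k$-th power of $\sum_{j\ge N}x^j/j$ directly as a $k$-fold sum and compare the coefficient of $x^{n+Nk}$ with the defining series (\ref{assoc.stf}). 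The two are equivalent bookkeeping (the product rule is precisely coefficient extraction from a product), so your route is slightly more elementary and self-contained, at the cost of not illustrating the Hasse--Teichm\"uller machinery that is the paper's organizing theme; your index shifts $j_t=N+i_t$, $m=n+Nk$ are carried out correctly, so there is no gap.
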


\noindent 
\begin{rem} 
When $N=1$, Theorem \ref{th26} is reduced to  
$$
c_n=\sum_{k=1}^n\dfrac{(-1)^{n-k}\binom{n+1}{k+1}}{\binom{n+k}{k}}\stf{n+k}{k}\,,
$$
which is Proposition 2 in \cite{KK}.  
\end{rem}

\begin{proof} 
From (\ref{assoc.stf}), we have 
\begin{align*} 
\left(\sum_{j=0}^\infty\frac{t^j}{j+N}\right)^k&=\left(\frac{-\log(1-t)-F_{N-1}(t)}{t^N}\right)^k\\
&=\sum_{n=k}^\infty k!\stf{n}{k}_{\ge N}\frac{t^{n-N k}}{n!}\\
&=\sum_{n=-(N-1)k}^\infty\frac{k!}{(n+N k)!}\stf{n+N k}{k}_{\ge N}t^n\,. 
\end{align*}   
Notice that 
$$
\left.H^{(i)}\left(\frac{-\log(1-t)-F_{N-1}(t)}{t^N}\right)\right|_{t=0}=\frac{1}{i+N}\,. 
$$ 
Applying Lemma \ref{productrule2} with 
$$
f_1(t)=\cdots=f_k(t)=\frac{-\log(1-t)-F_{N-1}(t)}{t^N}\,, 
$$
we get 
\begin{equation}  
\frac{k!}{(n+N k)!}\stf{n+N k}{k}_{\ge N}=\sum_{i_1,\dots,i_k\ge 0\atop i_1+\cdots+i_k=n}\frac{1}{(i_1+N)\cdots(i_k+N)}\,. 
\label{eq20} 
\end{equation}   
Together with Proposition \ref{th_hcauchy2}, we get the desired result.  
\end{proof}  

\begin{exa}  
Let $N=3$ and $n=4$. 
By the definition in (\ref{assoc.stf}), we get 
\begin{align*} 
&\frac{1}{7!}\stf{7}{1}_{\ge 3}=\frac{1}{7},\quad \frac{1}{10!}\stf{10}{2}_{\ge 3}=\frac{153}{1400},\quad 
\frac{1}{13!}\stf{13}{3}_{\ge 3}=\frac{1751}{50400}\\ 
&\hbox{and}\quad \frac{1}{16!}\stf{16}{4}_{\ge 3}=\frac{190261}{29030400}\,.
\end{align*}  
Hence, 
\begin{align*} 
c_{3,4}&=4!\sum_{k=1}^4\binom{5}{k+1}\frac{(-3)^k k!}{(4+3 k)!}\stf{4+3 k}{k}_{\ge 3}\\
&=4!\left(-\binom{5}{2}\frac{3}{7!}\stf{7}{1}_{\ge 3}+\binom{5}{3}\frac{3^2\cdot 2}{10!}\stf{10}{2}_{\ge 3}\right.\\
&\quad\left.-\binom{5}{4}\frac{3^3\cdot 3!}{13!}\stf{13}{3}_{\ge 3}+\binom{5}{5}\frac{3^4\cdot 4!}{16!}\stf{16}{4}_{\ge 3}\right)\\
&=-\frac{1971}{5600}\,. 
\end{align*} 
\end{exa}


Next, we shall obtain some explicit expressions of the hypergeometric Bernoulli numbers $B_{N,n}$ (\cite{HN1,HN2,Kamano}), defined by 
$$
\frac{1}{{}_1 F_1(1;N+1;x)}=\sum_{n=0}^\infty B_{N,n}\frac{x^n}{n!}\,, 
$$  
where ${}_1 F_1(a;b;z)$ is the confluent hypergeometric function defined by 
$$
{}_1 F_1(a;b;z)=\sum_{n=0}^\infty\frac{(a)^{(n)}}{(b)^{(n)}}\frac{z^n}{n!}\,.
$$ 
Then we have the following: 

\begin{thm}  
For $n\ge 1$, 
$$
B_{N,n}=n!\sum_{k=1}^{n}(-N!)^k\sum_{i_1,\dots,i_k\ge 1\atop i_1+\cdots+i_k=n}\frac{1}{(N+i_1)!\cdots(N+i_k)!}\,.
$$ 
\label{th_hbernoulli1} 
\end{thm}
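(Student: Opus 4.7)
The plan is to mirror the proof of Theorem \ref{th_hcauchy1} exactly, replacing the hypergeometric function ${}_2F_1(1,N;N+1;-x)$ by the confluent hypergeometric function $h := {}_1F_1(1;N+1;x)$, and replacing the rational coefficients by factorial ones. The scheme is: expand $h$ as an explicit power series, compute $H^{(i)}(h)|_{x=0}$ and $h|_{x=0}$, and then apply the quotient rule from Lemma \ref{quotientrules} (\ref{quotientrule1}) to $1/h$.

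First I would rewrite the coefficients of $h$ in a usable form. Since $(1)^{(j)}=j!$ and $(N+1)^{(j)}=(N+1)(N+2)\cdots(N+j)=(N+j)!/N!$, the series collapses to
$$
h=\sum_{j=0}^\infty\frac{N!}{(N+j)!}\,x^j.
$$
From this expression the evaluation $h|_{x=0}=1$ is immediate (so the factor $1/h^{k+1}|_{x=0}$ in the quotient rule disappears), and
$$
H^{(i)}(h)\big|_{x=0}=\left.\sum_{j=0}^\infty\frac{N!}{(N+j)!}\binom{j}{i}x^{j-i}\right|_{x=0}=\frac{N!}{(N+i)!},
$$
since only the $j=i$ term survives at $x=0$.

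Next I would apply Lemma \ref{quotientrules} (\ref{quotientrule1}) to $1/h$ and evaluate at $x=0$:
$$
\frac{B_{N,n}}{n!}=H^{(n)}\!\left(\frac{1}{h}\right)\bigg|_{x=0}=\sum_{k=1}^{n}(-1)^k\sum_{\substack{i_1,\dots,i_k\ge 1\\ i_1+\cdots+i_k=n}}\frac{N!}{(N+i_1)!}\cdots\frac{N!}{(N+i_k)!}.
$$
Factoring $(N!)^k$ out of each inner product combines with $(-1)^k$ to produce $(-N!)^k$, and multiplying by $n!$ yields the stated formula.

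There is essentially no substantive obstacle here beyond bookkeeping: every ingredient (the quotient rule, the Hasse--Teichmüller derivative acting on monomials, and the Pochhammer simplification) is already developed in the paper. The one place that warrants a brief sanity check is the identity $(N+1)^{(j)}=(N+j)!/N!$, which is needed to recognize the coefficient $N!/(N+i)!$ as exactly $H^{(i)}(h)|_{x=0}$; once that is in hand, the rest is a verbatim transcription of the argument used for $c_{N,n}$ in Theorem \ref{th_hcauchy1}.
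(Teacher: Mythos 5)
Your proposal is correct and follows essentially the same route as the paper: rewrite ${}_1F_1(1;N+1;x)$ as $\sum_{j\ge 0}N!x^j/(N+j)!$, evaluate $H^{(i)}(h)|_{x=0}=N!/(N+i)!$, and apply the quotient rule (\ref{quotientrule1}) to $1/h$. The extra check of $(N+1)^{(j)}=(N+j)!/N!$ is sound bookkeeping that the paper leaves implicit.
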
  
\begin{proof}  
Put 
$$
h:={}_1 F_1(1;N+1;x)=\sum_{j=0}^\infty\frac{N! x^j}{(N+j)!}\,. 
$$ 
Note that 
\begin{align*} 
\left. H^{(i)}(h)\right|_{x=0}=\left.\sum_{j=0}^\infty\frac{N!}{(N+j)!}\binom{j}{i}x^{j-i}\right|_{x=0}
=\frac{N!}{(N+i)!}\,. 
\end{align*}    
Hence, by using Lemma \ref{quotientrules} (\ref{quotientrule1}), we have 
\begin{align*} 
\frac{B_{N,n}}{n!}&=\left.H^{(n)}\left(\frac{1}{h}\right)\right|_{x=0}\\
&=\sum_{k=1}^n\left.\frac{(-1)^k}{h^{k+1}}\right|_{x=0}\sum_{i_1,\dots,i_k\ge 1\atop i_1+\cdots+i_k=n}\left.H^{(i_1)}(h)\right|_{x=0}\cdots\left.H^{(i_k)}(h)\right|_{x=0}\\
&=\sum_{k=1}^n(-1)^k\sum_{i_1,\dots,i_k\ge 1\atop i_1+\cdots+i_k=n}\frac{N!}{(N+i_1)!}\cdots\frac{N!}{(N+i_k)!}\\
&=\sum_{k=1}^n\sum_{i_1,\dots,i_k\ge 1\atop i_1+\cdots+i_k=n}\frac{(-N!)^k}{(N+i_1)!\cdots(N+i_k)!}\,. 
\end{align*} 
\end{proof}   

We express the hypergeometric Bernoulli numbers in terms of the binomial coefficients, too. 
In fact, by using Lemma \ref{quotientrules} (\ref{quotientrule2}) 
instead of Lemma \ref{quotientrules} (\ref{quotientrule1}) 
in the proof of Theorem \ref{th_hbernoulli1}, we obtain the following:  

\begin{prp}  
For $n\ge 1$, 
$$
B_{N,n}=n!\sum_{k=1}^{n}(-N!)^k\binom{n+1}{k+1}\sum_{i_1,\dots,i_k\ge 0\atop i_1+\cdots+i_k=n}\frac{1}{(N+i_1)!\cdots(N+i_k)!}\,.
$$ 
\label{th_hbernoulli2} 
\end{prp}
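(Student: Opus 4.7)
The plan is to mirror the proof of Theorem \ref{th_hbernoulli1} almost verbatim, substituting the alternative form (\ref{quotientrule2}) of the quotient rule in place of (\ref{quotientrule1}). All the preparatory computations from the previous proof carry over unchanged, so the work reduces to a bookkeeping exercise.

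First, I would set $h := {}_1F_1(1;N+1;x) = \sum_{j=0}^\infty \frac{N!\, x^j}{(N+j)!}$, exactly as in Theorem \ref{th_hbernoulli1}. The generating function identity defining $B_{N,n}$ gives $B_{N,n}/n! = H^{(n)}(1/h)\big|_{x=0}$. I would also reuse the computation
\[
\left.H^{(i)}(h)\right|_{x=0} = \frac{N!}{(N+i)!}
\]
valid for every $i \ge 0$; in particular, at $i=0$ we have $h(0)=1$, so $1/h^{k+1}\big|_{x=0}=1$ for every $k$.

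Next, I would apply Lemma \ref{quotientrules} formula (\ref{quotientrule2}) to $1/h$ at $x=0$:
\[
\left.H^{(n)}\!\left(\frac{1}{h}\right)\right|_{x=0}
= \sum_{k=1}^n \binom{n+1}{k+1}\frac{(-1)^k}{h(0)^{k+1}}
\sum_{\substack{i_1,\dots,i_k\ge 0\\ i_1+\cdots+i_k=n}}
\left.H^{(i_1)}(h)\right|_{x=0}\cdots \left.H^{(i_k)}(h)\right|_{x=0}.
\]
Substituting $h(0)=1$ and $H^{(i_j)}(h)|_{x=0} = N!/(N+i_j)!$ and multiplying through by $n!$ yields
\[
B_{N,n}=n!\sum_{k=1}^{n}(-N!)^k\binom{n+1}{k+1}\sum_{\substack{i_1,\dots,i_k\ge 0\\ i_1+\cdots+i_k=n}}\frac{1}{(N+i_1)!\cdots(N+i_k)!},
\]
which is the desired identity.

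There is no real obstacle here: the only substantive differences from the proof of Theorem \ref{th_hbernoulli1} are (i) the appearance of the factor $\binom{n+1}{k+1}$, which comes straight from the formula (\ref{quotientrule2}), and (ii) the index range $i_j \ge 0$ rather than $i_j \ge 1$, which is precisely the range occurring in (\ref{quotientrule2}). The potential pitfall to keep an eye on is the $k=0$ term in (\ref{quotientrule2}): one must check that the sum indeed starts at $k=1$, which it does by the statement of Lemma \ref{quotientrules}, so no spurious contribution appears from $H^{(0)}(h)=h$ evaluated at $0$.
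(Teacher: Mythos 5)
Your proposal is correct and is exactly the argument the paper intends: it states that Proposition \ref{th_hbernoulli2} follows by repeating the proof of Theorem \ref{th_hbernoulli1} with the quotient rule (\ref{quotientrule2}) in place of (\ref{quotientrule1}), which is precisely what you carried out. The evaluations $h(0)=1$ and $H^{(i)}(h)|_{x=0}=N!/(N+i)!$ are reused correctly, so nothing further is needed.
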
 

In the same way as the proof of Theorem \ref{th26}, 
using associated Stirling numbers of the second kind, we introduce a more convenient expression of hypergeometric Cauchy numbers. 
Associated Stirling numbers of the second kind $\sts{n}{k}_{\ge m}$ (\cite{Cha, Ko6, KLM, KMS}) are given by 
\begin{equation} 
\frac{\bigl(e^x-E_{m-1}(x)\bigr)^k}{k!}=\sum_{n=0}^\infty\sts{n}{k}_{\ge m}\frac{x^n}{n!}\quad(m\ge 1)\,, 
\label{assoc.sts} 
\end{equation} 
where 
$$
E_m(x)=\sum_{n=0}^m\frac{x^n}{n!}\,. 
$$     
When $m=1$, $\sts{n}{k}=\sts{n}{k}_{\ge 1}$ is the classical Stirling numbers of the second kind.  
Now, 
we obtain a simple expression for hypergeometric Bernoulli numbers in terms of the binomial coefficients and incomplete Stirling numbers of the second kind.

\begin{thm} 
For $N\ge 1$ and $n\ge 1$, we have  
$$
B_{N,n}=n!\sum_{k=1}^n\binom{n+1}{k+1}\frac{(-N!)^k k!}{(n+N k)!}\sts{n+N k}{k}_{\ge N}\,. 
$$ 
\label{th27} 
\end{thm}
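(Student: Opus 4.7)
The plan is to follow exactly the strategy used to prove Theorem \ref{th26}, systematically replacing the first-kind ingredients by their second-kind counterparts. First I would rewrite the generating function (\ref{assoc.sts}) in a form tailored to the sum appearing in Proposition \ref{th_hbernoulli2}. Since $e^x-E_{N-1}(x)=\sum_{n\ge N}x^n/n!$, factoring out $x^N$ gives
$$
\frac{e^x-E_{N-1}(x)}{x^N}=\sum_{j=0}^\infty\frac{x^j}{(j+N)!},
$$
and then raising both sides to the $k$-th power and invoking (\ref{assoc.sts}) should produce
$$
\left(\sum_{j=0}^\infty\frac{x^j}{(j+N)!}\right)^k=\sum_{n=0}^\infty\frac{k!}{(n+Nk)!}\sts{n+Nk}{k}_{\ge N}x^n.
$$
This is the exact second-kind analogue of the power-series identity that drives the proof of Theorem \ref{th26}.

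Next, I would read off from the preceding expansion the Hasse-Teichm\"uller values
$$
\left.H^{(i)}\left(\frac{e^x-E_{N-1}(x)}{x^N}\right)\right|_{x=0}=\frac{1}{(i+N)!},
$$
and then apply the product rule of Lemma \ref{productrule2} with $f_1=\cdots=f_k=(e^x-E_{N-1}(x))/x^N$. Matching the result against the coefficient of $x^n$ in the series above should yield
$$
\frac{k!}{(n+Nk)!}\sts{n+Nk}{k}_{\ge N}=\sum_{i_1,\dots,i_k\ge 0\atop i_1+\cdots+i_k=n}\frac{1}{(N+i_1)!\cdots(N+i_k)!},
$$
which is the second-kind analogue of (\ref{eq20}).

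Finally, I would substitute this identity into the formula supplied by Proposition \ref{th_hbernoulli2}; the stated expression for $B_{N,n}$ then drops out immediately after pulling $(-N!)^k$ out in front. I do not anticipate any real obstacle here: the argument is a direct parallel of the Cauchy case, and the only point meriting care is checking that the normalisation in (\ref{assoc.sts}) really does make $(e^x-E_{N-1}(x))/x^N$ the correct ``truncated exponential'' counterpart of $(-\log(1-t)-F_{N-1}(t))/t^N$, so that the two sides of the product-rule computation combine into the factorial-denominator identity required to meet Proposition \ref{th_hbernoulli2} cleanly.
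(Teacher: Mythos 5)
Your proposal is correct and is essentially identical to the paper's own proof: the same expansion of $\bigl(e^t-E_{N-1}(t)\bigr)/t^N$, the same Hasse--Teichm\"uller evaluation and product rule yielding the second-kind analogue of (\ref{eq20}) (the paper's (\ref{eq201})), and the same final substitution into Proposition \ref{th_hbernoulli2}. No gaps.
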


\begin{rem} 
When $N=1$, Theorem \ref{th27} is reduced to  
$$
B_n=\sum_{k=1}^n\dfrac{(-1)^{k}\binom{n+1}{k+1}}{\binom{n+k}{k}}\sts{n+k}{k}\,,
$$
which is a simple formula of Bernoulli numbers, appeared in \cite{Gould,SS}. 
\end{rem}

\begin{proof} 
From (\ref{assoc.sts}), we have 
\begin{align*} 
\left(\sum_{j=0}^\infty\frac{t^j}{(j+N)!}\right)^k&=\left(\frac{e^t-E_{N-1}(t)}{t^N}\right)^k\\
&=\sum_{n=k}^\infty k!\sts{n}{k}_{\ge N}\frac{t^{n-N k}}{n!}\\
&=\sum_{n=-(N-1)k}^\infty\frac{k!}{(n+N k)!}\sts{n+N k}{k}_{\ge N}t^n\,. 
\end{align*}   
Notice that 
$$
\left.H^{(i)}\left(\frac{e^t-E_{N-1}(t)}{t^N}\right)\right|_{t=0}=\frac{1}{(i+N)!}\,. 
$$ 
Applying Lemma \ref{productrule2} with 
$$
f_1(t)=\cdots=f_k(t)=\frac{e^t-E_{N-1}(t)}{t^N}\,, 
$$
we get 
\begin{equation}  
\frac{k!}{(n+N k)!}\sts{n+N k}{k}_{\ge N}=\sum_{i_1,\dots,i_k\ge 0\atop i_1+\cdots+i_k=n}\frac{1}{(i_1+N)!\cdots(i_k+N)!}\,. 
\label{eq201} 
\end{equation}   
Together with Proposition \ref{th_hbernoulli2}, we get the desired result.  
\end{proof}  


\end{document}